\newcommand{\bA}{{\mathbb{A}}}
\newcommand{\bC}{{\mathbb{C}}}
\newcommand{\bF}{{\mathbb{F}}}
\newcommand{\bN}{{\mathbb{N}}}
\newcommand{\bQ}{{\mathbb{Q}}}
\newcommand{\bR}{{\mathbb{R}}}
\newcommand{\bZ}{{\mathbb{Z}}}
\newcommand{\Bm}{{\mathbf{m}}}
\newcommand{\Bx}{{\mathbf{x}}}
\newcommand{\By}{{\mathbf{y}}}
  \newcommand{\B}{{\mathcal{B}}}
  \newcommand{\D}{{\mathcal{D}}}
  \newcommand{\F}{{\mathcal{F}}}
  \newcommand{\G}{{\mathcal{G}}}
\newcommand{\HH}{\mathcal{H}}
  \newcommand{\N}{{\mathcal{N}}}
  \newcommand{\Q}{{\mathcal{Q}}}
  \newcommand{\R}{{\mathcal{R}}}
\newcommand{\Scal}{{\mathcal{S}}}
\newcommand{\ep}{\varepsilon}
\newcommand{\ol}{\overline}
\newcommand{\upchi}{{\raise.35ex\hbox{$\chi$}}}
\newcommand{\Gal}{\operatorname{Gal}}
\newcommand{\fd}{\mathfrak{d}}
\newtheorem{theorem}{Theorem}[section]
\newtheorem{lemma}[theorem]{Lemma}
\theoremstyle{definition}
\numberwithin{equation}{section}
\begin{document}

\title{Density of power-free values of polynomials II}

\author{Kostadinka Lapkova}
\address{Institute of Analysis and Number Theory\\
Graz University of Technology \\
Kopernikusgasse 24/II\\
8010 Graz\\
Austria}
\email{lapkova@math.tugraz.at}

\author{Stanley Yao Xiao}
\address{Department of Mathematics \\
University of Toronto \\
Bahen Centre \\
40 St. George St., Room 6290 \\
Toronto, Ontario, Canada M5S 2E4}
\email{stanley.xiao@math.toronto.edu}
\indent


\begin{abstract} In this paper we prove that polynomials $F(x_1, \cdots, x_n) \in \bZ[x_1, \cdots, x_n]$ of degree $d \geq 3$, satisfying certain hypotheses, take on the expected density of $(d-1)$-free values. This extends the authors' earlier result in \cite{LX} where a different method implied the similar statement for polynomials of degree $d\geq 5$. 
\end{abstract}

\date\today

\maketitle


\section{Introduction}

Let $F \in \bZ[x_1, \cdots, x_n]$ be a polynomial of degree $d$. We say that $F$ is \emph{$k$-admissible} if for any prime $p$, there exists $\Bx_p \in \bZ^n$ for which $F(\Bx_p)$ is not divisible by $p^k$ and that $F$ is not divisible over $\bC$ by the square of a non-constant polynomial. In this paper we address the problem of finding the density of integer tuples $\Bx \in \bZ^n$ for which $F(\Bx)$ is indivisible by the $(d-1)$-th power of any positive integer greater than one. 
 In particular, we show that the number of such tuples of box height up to $B$ satisfies an asymptotic formula provided that $F$ is $(d-1)$-admissible; this is the statement of Theorem \ref{MT}. When $d \geq 5$ we have previously obtained the statement of Theorem \ref{MT} in \cite{LX}. \\

We briefly recount the history of the problem of finding, for a given $k$-admissible polynomial $F$ in $n$ variables, the density of $n$-tuples for which $F(\Bx)$ is $k$-free. The case of $n = 1$ is by far the most well-known. Using a simple sieving device it is not difficult to obtain the correct density of square-free values of linear polynomials, but already for the case $k = d$ some innovation is needed. This was first accomplished by Estermann \cite{Est}, who dealt with the density of square-free values of the polynomial $f(x) = x^2 + 1$. Estermann's argument was then shown to apply to all  $k$-admissible polynomials in a single variable whenever $k \geq d$ by Ricci \cite{Ric}. \\

The $k = d-1$ case was first addressed by Erd\"{o}s \cite{Erd}, who showed that the number of integers $1 \leq m \leq B$ for which an admissible polynomial in one variable of degree $d\geq 3$ takes on $(d-1)$-free values tends to infinity as $B$ does. However, he was unable to produce an asymptotic formula using his arguments. The expected asymptotic formula was settled by Hooley in \cite{Hoo0} in the special case of $f(x) = x^3 + k, k \in \bZ$ and in general in \cite{Hoo}. \\ 

Various authors have worked on the case of $k$-free values of polynomials of a single variable, see \cite{LX} and \cite{X} for a summary. In \cite{Gran}, A.~Granville demonstrated that the question of square-free values of polynomials is intimately related to the well-known \emph{abc}-conjecture. He showed that the analogous asymptotic relation given in (\ref{MT asy}) holds for square-free values for all admissible polynomials, provided that the \emph{abc}-conjecture holds. Poonen in \cite{Poo} then vastly generalized Granville's result to show that the same holds for admissible polynomials in arbitrarily many variables. However, Poonen's argument, similarly to the method of Erd\"{o}s, 
does not allow one to obtain the expected asymptotic formula (\ref{MT asy}). He further showed that, without the \emph{abc}-conjecture, one can still unconditionally obtain $k$-free values of polynomials provided that the degree of the polynomial $F$ is sufficiently small. In other words, if $k,d$ are related in a way that allows one to conclude that admissible degree $d$ polynomials in $\bZ[x]$ take on infinitely many $k$-free values, then the same holds for admissible degree $d$ polynomials in $\bZ[x_1, \cdots, x_n]$ for any $n\geq 2$. \\


This aspect of Poonen's paper is confirmed in our previous paper \cite{LX}, where a relatively straightforward reduction to the single-variable case is possible because of the uniformity produced by the determinant method. For the $k = d-1$ case more care is necessary. Indeed, we shall adopt a sieve argument introduced by Hooley in \cite{Hoo1}. For the middle range we use the sieving technique of Ekedahl in the same way as in our previous paper \cite{LX}.\\



Put
\begin{equation} \label{count} N_{F}(B) = \# \{\Bx \in \bZ^n : |\Bx| \leq B, F(\Bx) \text{ is } (d-1)\text{-free}\}
\end{equation} 
and
\begin{equation} \label{rhoF} \rho_F(m) = \# \{\Bx \in (\bZ/m \bZ)^n : F(\Bx) \equiv 0 \pmod{m}\}.
\end{equation}

We have the following theorem:

\begin{theorem} \label{MT} Let $F(\Bx) \in \bZ[x_1, \cdots, x_n]$ be a square-free polynomial of degree $d \geq 3$ such that for all primes $p$, there exists $\By \in \bZ^n$ satisfying $p^{d-1} \nmid F(\By)$.  
Then for all $B \in \bR_{> 0}$ we have the asymptotic relation
\begin{equation} \label{MT asy} N_F(B) = C_F B^n + o_F\left(B^n \right),\end{equation}
where the positive constant $C_F$ is defined by
\[C_F=\prod_p \left(1 - \frac{\rho_F(p^{d-1})}{p^{n(d-1)}} \right).\]

\end{theorem}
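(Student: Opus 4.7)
The plan is a three-range sieve following the strategy announced in the introduction. Fix parameters $\xi_1 = \xi_1(B) \to \infty$ slowly and $\xi_2 = \xi_2(B) = B^{\theta}$ for a suitable small $\theta > 0$, and decompose
\[
  (2B+1)^n - N_F(B) \leq \sum_{p \leq \xi_1} E_1(p) + \sum_{\xi_1 < p \leq \xi_2} E_2(p) + \sum_{p > \xi_2} E_3(p),
\]
where $E_i(p) = \#\{\Bx\in\bZ^n : |\Bx|\leq B,\ p^{d-1}\mid F(\Bx)\}$. The goal is to extract from the small-prime contribution (after inclusion-exclusion) the expected main term $C_F (2B+1)^n$, and to show that the medium- and large-prime contributions are each of size $o(B^n)$.

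For the small primes, the Chinese Remainder Theorem combined with M\"obius inversion yields
\[
  \#\{\Bx : |\Bx|\leq B,\ p^{d-1}\nmid F(\Bx) \text{ for all } p\leq \xi_1\} = (2B+1)^n \prod_{p\leq \xi_1}\Bigl(1 - \frac{\rho_F(p^{d-1})}{p^{n(d-1)}}\Bigr) + O\Bigl(B^{n-1}\prod_{p\leq \xi_1}p^{d-1}\Bigr).
\]
Letting $\xi_1 \to \infty$ slowly enough makes the error $o(B^n)$, while the product converges to $C_F$; the convergence follows from the bound $\rho_F(p^{d-1}) \ll p^{(n-1)(d-1)}$ for almost all $p$, which is a consequence of the Lang--Weil estimate together with the square-freeness of $F$.

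For the medium range $\xi_1 < p \leq \xi_2$, we invoke Ekedahl's geometric sieve in the same form used in \cite{LX}. Since $F$ is square-free, the singular locus of $\{F=0\} \subset \bA^n$ has codimension at least two in $\bA^n$, and Ekedahl's estimate gives
\[
  \sum_{\xi_1 < p \leq \xi_2} E_2(p) \ll \frac{B^n}{\xi_1} + B^{n-1}\xi_2^{C_0} = o(B^n),
\]
for a constant $C_0 = C_0(d,n)$, provided $\theta$ is chosen small enough.

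The main new ingredient of the paper, and the principal obstacle, is the large-prime range $p > \xi_2$. Here the reduction to the one-variable case from \cite{LX} no longer delivers sufficient savings (it required $d \geq 5$), and we adapt Hooley's sieve from \cite{Hoo1}. Interchanging the order of summation, for each $\Bx$ with $F(\Bx) \neq 0$ we bound $\#\{p > \xi_2 : p^{d-1}\mid F(\Bx)\}$ by splitting according to the size of $p^{d-1}$ relative to $|F(\Bx)| \ll B^d$. When $p^{d-1} > C_1 B^d$ the only contributing $\Bx$ satisfy $F(\Bx) = 0$, producing $O(B^{n-1})$ points by dimension. In the intermediate sub-range we write $F(\Bx) = p^{d-1} m$ with $|m| \ll B^d/p^{d-1}$ and must count integer points on the fibres of $F$ uniformly in $p$ and $m$, extracting a saving that beats $B^n$ after summing over $p$ and $m$. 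The delicate case is $d = 3$, where $p$ can be as large as $B^{3/2}$ and naive lattice-point arguments fail; we expect to need a careful application of the determinant method, combined with Hooley's dyadic decomposition in the exponent of $p$, to handle this regime.
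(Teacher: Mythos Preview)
Your small-prime and medium-prime treatments are broadly in line with the paper's (though the paper takes $\xi_2 = B(\log B)^{\delta_n}$, not $B^\theta$; see below). The genuine gap is in the large-prime range.

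What you describe for $p > \xi_2$ is not actually Hooley's sieve from \cite{Hoo1}: writing $F(\Bx) = p^{d-1}m$ and then ``counting integer points on the fibres of $F$'' via the determinant method is precisely the approach of \cite{LX}, and it is exactly this approach that the present paper is designed to replace, because it does \emph{not} give enough saving when $d \in \{3,4\}$. Your own final sentence (``we expect to need a careful application of the determinant method \ldots\ to handle this regime'') concedes that you do not have an argument here, and the paper's whole point is that no such determinant-method argument is known for $d=3$.

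The actual mechanism in the paper is quite different. First, one takes $\xi_2 = B(\log B)^{\delta_n}$ (slightly larger than $B$), so that in the tail range $F(\Bx) = u q^{d-1}$ with $q > \xi_2$ forces $|u| \ll B(\log B)^{-(d-1)\delta_n}$. The middle range is set up with a $p^2 \mid F$ condition (not $p^{d-1}\mid F$), which guarantees that the cofactor $u$ factors as $u = u_1 u_2$ with $u_1 \mid \prod_{p \le \xi_1} p^{d-2}$ and $u_2$ \emph{square-free} supported on primes in $(\xi_1,\xi_2]$. Then comes Hooley's key device: for each $u_2$ one introduces an auxiliary modulus
\[
  \D(u_2) = \prod_{\substack{\xi_1 < p \le \frac{1}{12}\log(B/u_2) \\ p \nmid u_2,\ p \equiv 1 \pmod{d-1}}} p,
\]
and observes that since each prime $p\mid \D$ satisfies $(d-1)\mid p-1$, the residue $q^{d-1}\pmod{\D}$ has exactly $(d-1)^{\omega(\D)}$ many $(d-1)$-th roots. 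This lets one replace the unknown large prime $q$ by a free variable $s \pmod{\D}$ at the cost of dividing by $(d-1)^{\omega(\D)}$. The resulting congruence system modulo $u_1 u_2 \D \ll B$ is then counted directly using the Lang--Weil bound (after an Ekedahl-sieve reduction to geometrically irreducible components), and the saving comes from the factor $(d-1)^{-\omega(\D)}$, not from any fibre-counting or determinant-method input. None of this structure appears in your proposal.
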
 

The error term in the theorem is too weak for us to confirm the analogous relation for prime inputs, as we could do in \cite{LX}. \\ 

The proof of the theorem is divided in three sections. The first one recreates the setting of the simple sieve and the preliminary work similar to the one in \cite{LX}. In section \S\ref{Ekedahl} we formulate the sieve of Ekedahl and apply it once for estimating the middle range of primes and another time in order to reduce our task to dealing with polynomials which are geometrically irreducible. This will simplify the argument in the last section \S\ref{N3} where at a certain point we apply the Lang-Weil bound for geometrically irreducible polynomials. The technique in the last range follows ideas from Hooley's work \cite{Hoo1} in the simpler setting $k=d-1$. In particular, we will avoid use of the Prime Ideal Theorem (e.g. Lemma 3 and Lemma 4 in \cite{Hoo1}) and elementary arguments would suffice. \\

We remark that for the middle range one could also apply the stratification result with upper bounds of exponential sums modulo $p^2$ that we proved in \cite{LX_exp}, if we insert the latter result in Hooley's argument from \cite{Hoo1}. However, at present this only applies to homogeneous polynomials, so in the current paper we apply the Ekedahl sieve in line with our argument in \cite{LX}.      


\section{Preliminaries}
\label{Prelim}

We will show that $N_{F}(B)$ (recall (\ref{count})) satisfies an inequality of the form
\begin{equation} \label{simple sieve} N_1(B) - N_2(B) - N_3(B) \leq N_{F}(B) \leq N_1(B).
\end{equation} 
Our goal will be to demonstrate that for any $\ep > 0$, we have
\[N_1(B) = B^n \prod_{p \leq \xi_1} \left(1 - \frac{\rho_F(p^{d-1})}{p^{n(d-1)}}\right) + O_{F,\ep} \left(B^{n - 1 + \ep}\right),\]
and for some $\delta_n > 0$ and some slowly growing function $\xi_1 = \xi_1(B)$ tending to infinity as the parameter $B$ tends to infinity, one has
\[N_2(B) = O_F\left(B^n \left(\xi_1^{-1} + (\log B)^{-\delta_n} \right) \right)\]
and 
\[N_3(B) = o_F(B^n).\]
Let us define the iterated logarithm function for positive $B$ by $\log_1 (B) = \max\{1, \log B\}$ and $\log_s B = \log_1 \log_{s-1} B$ for $s \geq 2$. We now suppose that $\xi_1(B)$ satisfies
\begin{equation} \label{xi1} \xi_1 = \xi_1(B)=O(\log_3 B/\log_4 B), \end{equation}
and 
\begin{equation} \label{xi2} \xi_2 = \xi_2(B)= B (\log B)^{\delta_n},\end{equation}
where $\delta_n$ depends on $n$ only and $\delta_n > 0$ for all $n$. \\

We then put
\begin{equation} \label{M1} N_1(B) = \# \{\Bx \in \bZ^n : |\Bx| \leq B,\quad p^{d-1} | F(\Bx)\implies p > \xi_1 \},\end{equation}
\begin{equation} \label{M2} N_2(B) = \# \{\Bx \in \bZ^n : |\Bx| \leq B, p^{d-1} | F(\Bx)\implies p > \xi_1,\end{equation}
\[\exists \xi_1<p \leq \xi_2 \text{ s.t. } p^2 | F(\Bx)\},  \]

and
\begin{equation} \label{M3} N_3(B) = \# \{\Bx \in \bZ^n : |\Bx| \leq B, p^{d-1} | F(\Bx)\implies p > \xi_1,\end{equation}
\[p^2 \nmid F(\Bx) \text{ for } \xi_1 < p \leq \xi_2 \text{ and } \exists p > \xi_2 \text{ s.t. } p^{d-1} | F(\Bx)\}\}.\]

Before we proceed with estimating $N_1(B)$, let us state some facts about the function $\rho_F$ as defined in (\ref{rhoF}). We need the following lemma: 
\begin{lemma} \label{rhoF est} Let $F$ be a square-free polynomial in $n$ variables with integer coefficients, and such that for all primes $p$, $p^k$ does not divide $F$ identically. Then for any $k\geq 2$ we have $\rho_F(p^k) = O_F\left(p^{nk-2}\right)$. 
\end{lemma}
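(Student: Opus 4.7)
The plan is to stratify $(\bZ/p^k\bZ)^n$ by reduction modulo $p$ and use Hensel lifting. For the finitely many ``bad'' primes $p$ at which $F \bmod p$ fails to be a nonzero square-free polynomial in $\bF_p[\Bx]$ (primes dividing the content of $F$, or those at which $F\bmod p$ acquires a repeated factor), the trivial bound $\rho_F(p^k) \le p^{nk}$ differs from $p^{nk-2}$ by a factor $p^2 \le C_F$, so these primes are absorbed into $O_F$.

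For a good prime $p$, I split the zeros of $F$ modulo $p$ into smooth points, where $\nabla F(\Bx_0)\not\equiv 0\pmod p$, and singular points, where $\nabla F(\Bx_0)\equiv 0\pmod p$. The smooth locus contains at most $\rho_F(p) = O_F(p^{n-1})$ points by the standard hypersurface bound (solve for one coordinate in terms of the others). The singular locus $V(F,\nabla F)$ is a proper subvariety of the hypersurface $V(F)$, since $F$ is square-free mod $p$; hence it has dimension at most $n-2$, and by Lang-Weil (or elementary induction on dimension) it has $O_F(p^{n-2})$ points over $\bF_p$.

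For a smooth $\Bx_0\pmod p$, say with $\partial_1 F(\Bx_0)\not\equiv 0\pmod p$, I write $\Bx=\Bx_0+p\By$ with $\By\in(\bZ/p^{k-1}\bZ)^n$ and Taylor expand
\[F(\Bx)\equiv F(\Bx_0)+p\,\nabla F(\Bx_0)\cdot\By \pmod{p^2},\]
all higher Taylor terms being divisible by $p^2$. Dividing through by $p$, the requirement $F(\Bx)\equiv 0\pmod{p^k}$ becomes a single equation for $\By$ modulo $p^{k-1}$ whose $y_1$-coefficient is a unit, which by Hensel's lemma has exactly $p^{(n-1)(k-1)}$ solutions. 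The smooth contribution to $\rho_F(p^k)$ is therefore $O_F(p^{n-1}\cdot p^{(n-1)(k-1)})=O_F(p^{(n-1)k})\le O_F(p^{nk-2})$ since $k\ge 2$. For each singular $\Bx_0\pmod p$ the trivial bound of $p^{n(k-1)}$ lifts suffices, contributing $O_F(p^{n-2}\cdot p^{n(k-1)})=O_F(p^{nk-2})$. Summing the two contributions yields the claim.

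The main obstacle is the geometric input: one must verify that for all but finitely many primes $F\bmod p$ remains square-free and its singular locus has codimension at least $2$ in $\bA^n$, and that the Lang-Weil estimate applies with an implied constant depending only on the degree and number of variables of $F$. Both are standard in algebraic geometry but need to be spelled out carefully to yield a constant that is genuinely uniform in $p$; the hypothesis that $F$ is square-free over $\bZ$ (together with the assumption $p^k\nmid F$ identically) is exactly what makes the finite exceptional set of primes harmless.
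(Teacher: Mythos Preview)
Your argument is correct. The paper itself does not actually prove this lemma: its entire proof is the sentence ``This is Lemma 2.1 from \cite{LX}.'' So there is no in-paper argument to compare against, and you have supplied a valid standalone proof.

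A couple of minor remarks on presentation. In the Hensel step you only display the Taylor expansion modulo $p^2$ but then invoke the full conclusion modulo $p^k$; this is fine because you correctly appeal to Hensel's lemma (applied to the single-variable polynomial $t\mapsto F(t,x_2,\dots,x_n)$ with the remaining coordinates fixed), but a reader might appreciate the sentence ``for each lift of $(x_2,\dots,x_n)$ there is a unique lift of $x_1$'' made explicit. For the claim that the singular locus of a square-free hypersurface over $\bF_p$ has dimension at most $n-2$, you might note that over a perfect field an irreducible polynomial cannot lie in $\bF_p[x_1^p,\dots,x_n^p]$ (else it would be a $p$-th power), so each irreducible factor has a nonzero partial derivative coprime to it; together with the pairwise intersections of distinct factors this gives the codimension-$2$ bound with constants depending only on $\deg F$.
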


\begin{proof}
This is Lemma 2.1 from \cite{LX}.
\end{proof}

We remark that Lemma \ref{rhoF est} implies that the infinite product
\[\prod_p \left(1 - \frac{\rho_F(p^{d-1})}{p^{n(d-1)}} \right)\]
converges. This is because 
\[\frac{\rho_F(p^{d-1})}{p^{n(d-1)}} = O \left(\frac{1}{p^2}\right),\]
by Lemma \ref{rhoF est}. \\

We give an estimate for $N_1(B)$. Define, for a positive integer $b$, the quantity
\[N(b,B) = \# \{\Bx \in \bZ^n \cap [-B,B]^n : b^{d-1} | F(\Bx)\}.\]
Then from the familiar property of the M\"obius function $\mu$, we have
\begin{align*} N_1(B) & = \sum_{\substack{ b \in \bN \\ p | b \Rightarrow p \leq \xi_1}} \mu(b) N(b,B) \\
& = \sum_{\substack{ b \in \bN \\ p | b \Rightarrow p \leq \xi_1}} \mu(b) \rho_F(b^{d-1}) \left(\frac{B^n}{b^{n(d-1)}} + O\left(\frac{B^{n-1}}{b^{(n-1)(d-1)}} + 1\right) \right) \\
& = B^n \prod_{p \leq \xi_1} \left(1 - \frac{\rho_F(p^{d-1})}{p^{n(d-1)}}\right) + O\left(\sum_{\substack{ b \in \bN \\ p | b \Rightarrow p \leq \xi_1}} \rho_F(b^{d-1}) \left(\frac{B^{n-1}}{b^{(n-1)(d-1)}} + 1 \right) \right).
\end{align*}
By the theorem of Rosser and Schoenfeld \cite{RS}, it follows that for all $\ep > 0$ and some $C' > 0$ we have
\[\prod_{p \leq \xi_1} p \leq e^{2 \xi_1} = O \left((\log_2 B)^{\frac{C'}{\log_4 B}} \right) = O_\ep (B^\ep),\]
by (\ref{xi1}). Hence, we obtain via Lemma \ref{rhoF est} that, for any $\ep > 0$,
\[N_1(B) = B^n \prod_{p \leq \xi_1} \left(1 - \frac{\rho_F(p^{d-1})}{p^{n(d-1)}}\right) + O\left(\sum_{b \ll_\ep B^\ep} B^{n-1+\ep} + b^{n(d-1) - 2 +\ep}  \right).\]
We then see that
\begin{equation} \label{N1B main estimate} N_1(B) = B^n \prod_{p \leq \xi_1} \left(1 - \frac{\rho_F(p^{d-1})}{p^{n(d-1)}}\right) +  O_{\ep} \left(B^{n-1 + \ep}  \right).\end{equation}
As $B \rightarrow \infty$, the partial product in (\ref{N1B main estimate}) tends to the convergent product in Theorem \ref{MT}, thus it suffices to show that $N_2(B), N_3(B)$ are error terms. 
\section{The Ekedahl sieve and the estimation of $N_2(B)$}
\label{Ekedahl}

In this section, we use a result of Ekedahl \cite{Eke} to handle several situations. As a consequence, we also give an acceptable estimate for $N_2(B)$. The version below is formulated by Bhargava and Shankar in \cite{BhaSha}: 

\begin{lemma}[Ekedahl sieve] \label{Ekedahl sieve} Let $\B$ be a compact region in $\bR^n$ having finite measure, and let $Y$ be any closed subscheme of $\bA_\bZ^n$ of co-dimension $s \geq 2$. Let $r$ and $M$ be positive real numbers. Then we have 
\[\#\{\Bx \in r \B \cap \bZ^n : \Bx \pmod{p} \in Y(\bF_p) \text{ for some prime } p > M\} \]
\[ = O \left(\frac{r^n}{M^{s-1} \log M} + r^{n - s + 1} \right).\]
\end{lemma}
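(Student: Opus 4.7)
The plan is to adopt the classical two-range sieve: split the primes $p > M$ into a "small" range $M < p \leq r$ and a "large" range $p > r$, and apply different arguments to each. The main preliminary step is to replace the closed subscheme $Y$ by a more tractable complete-intersection cover. Since $Y \subseteq \bA^n_\bZ$ has codimension $s \geq 2$, standard commutative algebra produces polynomials $f_1, \ldots, f_s \in \bZ[x_1, \ldots, x_n]$ of uniformly bounded degrees such that $Y \subseteq V := V(f_1, \ldots, f_s)$ and $V$ itself has codimension exactly $s$. A linear change of coordinates over $\bZ$ followed by Noether normalization then yields a finite projection $\pi\colon V \to \bA^{n-s}_\bZ$ of some bounded degree $D$. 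For each prime $p$ and each $\Bz \in \bZ^{n-s}$, the fiber bound
\[
\#\{\By \in [-r,r]^s \cap \bZ^s : f_i(\Bz, \By) \equiv 0 \pmod{p}\ \forall i\} \;\ll\; (r/p + 1)^s
\]
follows, and summing over $\Bz \in [-r,r]^{n-s} \cap \bZ^{n-s}$ produces the per-prime estimate
\[
\#\{\Bx \in r\B \cap \bZ^n : p \mid f_i(\Bx)\ \forall i\} \;\ll\; \frac{r^n}{p^s} + r^{n-s}.
\]

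For small primes $M < p \leq r$ the term $r^n/p^s$ dominates the per-prime estimate, and the elementary tail bound $\sum_{p > M} p^{-s} \ll 1/(M^{s-1} \log M)$, valid precisely because $s \geq 2$, gives the first main term $O(r^n/(M^{s-1}\log M))$ announced in the lemma.

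For large primes $p > r$ one argues as follows. A bad $\Bx$ either lies in $V(\bZ) \cap r\B$, in which case it is counted at most once, and by the finite projection $|V(\bZ) \cap r\B| \ll r^{n-s}$, which is absorbed into $O(r^{n-s+1})$; or else some $f_j(\Bx) \neq 0$ is divisible by $p$, forcing $r < p \ll r^D$ and showing that each such $\Bx$ is associated to at most $O_D(1)$ admissible primes. Combining this $\Bx$-centric bound with the per-prime estimate above and using the convergent tail $\sum_{p > r} p^{-s} \ll r^{1-s}/\log r$ produces an overall contribution of $O(r^{n-s+1})$ from the large-prime range.

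The main obstacle is precisely this large-prime accounting: a naïve per-prime Lang--Weil bound $|V(\bF_p)| \ll p^{n-s}$ summed over $r < p \ll r^D$ would be too weak by a positive power of $r$, and the saving comes only from blending the per-prime estimate with the observation that each bad $\Bx \notin V(\bZ)$ determines only boundedly many admissible primes. This two-sided counting, together with the hypothesis $s \geq 2$ ensuring convergence of $\sum_p p^{-s}$, is what produces the clean $O(r^{n-s+1})$ error term in the statement of the lemma.
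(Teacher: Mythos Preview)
The paper does not prove this lemma; it is quoted from Bhargava--Shankar \cite{BhaSha} and used as a black box. Your small-prime range $M<p\le r$ is handled correctly. The gap is in the large-prime range $p>r$: there your per-prime estimate is only $\ll r^{n-s}$, not $r^n/p^s$, since once $p>2r$ the box is too small relative to $\bF_p^n$ for any density saving (for instance, when $V$ is a rational linear subspace the count is genuinely $\asymp r^{n-s}$ for every $p$). Your $\Bx$-centric observation does restrict the relevant primes to $r<p\ll r^{D}$, but summing the per-prime bound $r^{n-s}$ over this range still gives $r^{n-s+D}/\log r$, far too large once the degrees exceed~$1$. The inequality ``each bad $\Bx$ has $O_D(1)$ primes'' bounds the number of pairs $(\Bx,p)$ \emph{above} by a multiple of the number of bad $\Bx$, which is the wrong direction for an upper bound on the latter; so the tail $\sum_{p>r}p^{-s}$ plays no legitimate role here and the announced ``combination'' does not close.

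What is missing is a coordinate-by-coordinate pinning that the finite projection actually affords. Finiteness of $\pi\colon V\to\bA^{n-s}$ produces, for each $j=1,\dots,s$, a polynomial $g_j(z,y_j)$ in the ideal of $V$ that is monic in $y_j$ over $\bZ[z]$ (after inverting finitely many primes). Fix $z\in[-r,r]^{n-s}$. If $g_1(z,y_1)\ne 0$ then any admissible prime $p>r$ divides this nonzero integer of size $r^{O(1)}$, which pins $p$ to $O(1)$ possibilities; for each such $p>2r$ the congruences $g_j(z,y_j)\equiv 0\pmod p$ leave only $O(1)$ choices for each remaining $y_j$. The $O(1)$ exceptional values of $y_1$ with $g_1(z,y_1)=0$ are handled by recursion on $s$. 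One obtains $O(r)$ bad $y$ for each $z$ and hence $O(r^{n-s+1})$ in total. You assembled the right ingredients but stopped one step short of this argument.
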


%

Our first application of Lemma \ref{Ekedahl sieve} is to reduce to the case when $F$ is irreducible over $\bQ$. Suppose that $F$ is reducible over $\bQ$, say 
\[F(x_1, \cdots, x_n) = \F_1(\Bx) \cdots \F_r (\Bx),\]
where each $\F_i$ is irreducible over $\bQ$ for $i = 1, \cdots, r$. We put $d^\prime = \max_{1 \leq j \leq r} \deg \F_j$. Let us write 
\begin{equation} \label{N3j B} N_3^{(j)}(B) = \# \{\Bx \in \bZ^n : |x_i| \leq B,\quad p^k | \F_j(\Bx) \implies p > \xi_1,  \end{equation}
\[p^2 \nmid \F_j(\Bx) \text{ for } \xi_1 < p \leq \xi_2, \text{ and }\exists p > \xi_2 \text{ s.t. } p^k | \F_j(\Bx) \}.\]
If $\Bx$ is counted by $N_3(B)$ but not by $N_3^{(j)}(B)$, then $\Bx$ must vanish mod $p$ for two of the $\F_i$'s. Put $V_i$ for the variety defined by the vanishing of $\F_i$; note that $V_i$ is defined over $\bZ$ for $i = 1, \cdots, r$. We claim that the scheme-theoretic intersection $V_{i,j} = V_i \cap V_j$ for $i \ne j$ must be co-dimension $2$ in $\bA^n(\bZ)$. Indeed, if $V_{i,j}$ does not have co-dimension $2$ then $V_i, V_j$ must share a top dimensional component. Therefore $\F_i, \F_j$ must share a geometrically integral factor $G$ defined over $\ol{\bQ}$. However $\F_i, \F_j$ are defined over $\bQ$, so this implies that they must share a factor $\G$ defined over $\bQ$. But then $\G^2 | F$, which contradicts our hypothesis that $F$ is square-free over $\bC$. This confirms the claim that $V_{i,j} $ has co-dimension $2$. \\

Ekedahl's sieve then gives the same bound as in (\ref{Eke bd}) for the number of such points in $[-B,B]^n \cap \bZ^n$. It therefore suffices to deal with the case when $F$ is irreducible over $\bQ$ and $d = \deg F$. \\ 

We now address the situation when $F$ is not geometrically integral. We will show that we can reduce our analysis over $\bF_p$ to points belonging to only one geometrically irreducible component definable over $\bF_p$. This will be useful later when we apply the Lang-Weil bounds. Since we may now assume that $F$ is irreducible over $\bQ$, it follows that over $\ol{\bQ}$, $F$ must admit a factorization as 
\begin{equation} \label{Qbar fact} F(\Bx)=\prod_{j=1}^k F_j(\Bx)\end{equation}
where $F_j$ are absolutely irreducible conjugate polynomials defined over $\ol{\bQ}$ having the same degree $d^\prime$, and $k = d/d^\prime$. For a given prime $p$, let $\tau(p)$ be the number of $F_i$'s defined over $\bQ_p$. Then (\ref{Qbar fact}) turns into
\begin{equation} \label{modp fact} F(\Bx) = F_p^\ast(\Bx) \prod_{j=1}^{\tau(p)} G_j^{(p)}(\Bx)
\end{equation}
where $F_p^\ast \in \bZ_p[x_1, \cdots, x_n]$ and $G_j^{(p)} = F_{k_j}$ for some $k_j$, such that $G_j^{(p)}$ have $\bZ_p$-coefficients. Now, if $\Bx \in \bZ^n$ is counted by $N_2(B), N_3(B)$ then $F(\Bx) \equiv 0 \pmod{p^2}$ for some $p>\xi_1$. The solutions to the congruence $F(\Bx) \equiv 0 \pmod{p^2}$ then must satisfy one of the situations below: 
\begin{itemize}
\item[(a)] There exist $1\leq i<j\leq\tau(p)$ such that $G_i^{(p)}(\Bx)\equiv G_j^{(p)}(\Bx)\equiv 0\pmod p$, 
\item[(b)] There exists uniquely an index $1 \leq i \leq \tau(p)$ such that $G_j^{(p)}(\Bx) \equiv 0 \pmod{p^2}$, or
\item[(c)] $F_p^*(\Bx)\equiv 0\pmod{p} .$
\end{itemize}
We remark that condition (c) is a mod $p$ condition rather than a mod $p^2$ condition. That this suffices will be made clear in the proof of Lemma \ref{lem_ddagger}. \\ 

We put
\begin{equation} \label{Sp} \Scal_p = \{\Bx \in (\bZ/p\bZ)^n : \Bx \text{ satisfies (a) or (c)}\}.\end{equation}
Next put
\begin{equation} \label{N ddagger} N^\ddagger (B) = \# \{\Bx \in \bZ^n \cap [-B,B]^n :  p^k | F(\Bx) \Rightarrow p > \xi_1, \exists p > \xi_1 \text{ s.t. } \Bx \pmod{p} \in \Scal_p \}.\end{equation}
Another application of Ekedahl's sieve then gives the following estimate:
\begin{lemma}\label{lem_ddagger} We have 
\[N^\ddagger(B) = O \left(\frac{B^n}{\xi_1 \log \xi_1} + B^{n-1} \right).\]
\end{lemma}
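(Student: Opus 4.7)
The plan is to apply Ekedahl's sieve (Lemma \ref{Ekedahl sieve}) to a closed subscheme $Y \subset \bA^n_\bZ$ of codimension $s=2$, taking $r = B$, $\B = [-1,1]^n$, and $M = \xi_1$. With these parameters the lemma yields precisely
\[O\left(\frac{B^n}{\xi_1 \log \xi_1} + B^{n-1}\right),\]
so the task reduces to constructing a single $\bZ$-scheme $Y$ of codimension $2$ such that, for all sufficiently large primes $p$, one has $\Scal_p \subseteq Y(\bF_p)$.

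To build $Y$, I would start from the absolutely irreducible factorization (\ref{Qbar fact}) of $F$ and set
\[Y_{\ol{\bQ}} := \bigcup_{1 \leq i < j \leq k} V(F_i) \cap V(F_j).\]
Since $F$ is square-free, the $F_i$ are pairwise distinct absolutely irreducible polynomials, and so each $V(F_i) \cap V(F_j)$ has codimension at least $2$ in $\bA^n_{\ol{\bQ}}$. The union is invariant under $\Gal(\ol{\bQ}/\bQ)$, which merely permutes the $F_i$ and so preserves the set of unordered pairs, hence $Y_{\ol{\bQ}}$ descends to $Y_\bQ \subset \bA^n_\bQ$; the schematic closure then gives the required $Y \subset \bA^n_\bZ$.

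It remains to verify that (a) and (c) force $\Bx \pmod p$ into $Y(\bF_p)$. For (a), the vanishing of two distinct $G_i^{(p)}, G_j^{(p)}$ at $\Bx$ mod $p$ places $\Bx$ directly in the reduction of some $V(F_{k_i}) \cap V(F_{k_j})$, hence in $Y(\bF_p)$. Condition (c) is the subtle case: for every prime $p$ unramified in the splitting field of the coefficients of the $F_i$ -- all but finitely many primes -- the decomposition group is generated by Frobenius, the $\ol{\bQ}_p$-orbit structure on $\{F_i\}$ coincides with the Frobenius orbit structure, and the reductions $\bar F_i \in \ol{\bF}_p[\Bx]$ remain pairwise distinct. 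For such $p$, $F_p^*$ reduced mod $p$ is a product over Frobenius orbits of size $\geq 2$ of the $\bar F_i$. Thus if $\Bx \in \bF_p^n$ satisfies $F_p^*(\Bx) \equiv 0 \pmod p$, some $\bar F_j(\Bx) = 0$ with $j$ lying in such an orbit; applying the Frobenius $\sigma$ of $\Gal(\ol{\bF}_p/\bF_p)$, which fixes $\Bx$, gives $\bar F_{\sigma(j)}(\Bx) = 0$ with $\sigma(j) \neq j$, so $\Bx \pmod p \in V(\bar F_j) \cap V(\bar F_{\sigma(j)}) \subseteq Y(\bF_p)$.

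Once this inclusion is in hand the sieve delivers the bound immediately, and the finitely many excluded bad primes below some absolute constant are harmless because $p > \xi_1 \to \infty$. The main obstacle is the Galois-theoretic step for condition (c): one must check carefully that the $\Gal(\ol{\bQ}_p/\bQ_p)$-orbits of the $F_i$ really produce, after reduction, a second absolutely irreducible factor $\bar F_{\sigma(j)}$ distinct from $\bar F_j$ vanishing at $\Bx$. This is also why the paper's remark stresses that (c) is a mod $p$ condition rather than mod $p^2$: it is enough to push $\Bx$ into a codimension-$2$ subscheme modulo $p$, which is exactly what Ekedahl's sieve exploits.
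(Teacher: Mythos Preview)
Your proposal is correct and follows essentially the same approach as the paper's proof: both show that for every relevant prime $p$ the set $\Scal_p$ is contained in the $\bF_p$-points of a codimension-$2$ scheme built from pairwise intersections $V(F_i)\cap V(F_j)$ of the absolutely irreducible components of $\{F=0\}$, and then invoke the Ekedahl sieve with $M=\xi_1$. Your treatment is in fact slightly more explicit than the paper's in two respects: you construct a single global $Y\subset\bA^n_\bZ$ by Galois descent and schematic closure (the paper only asserts that the points ``lie on a uniformly bounded collection of co-dimension 2 sub-schemes''), and for case (c) you trace the Frobenius-orbit structure from characteristic~$0$, whereas the paper argues directly over $\bF_p$ that an $\bF_p$-irreducible factor $\G$ of $F_p^\ast$ splits over $\ol{\bF_p}$ and that $\Gal(\ol{\bF_p}/\bF_p)$, fixing the $\bF_p$-rational point $\Bx$, forces two distinct conjugate factors $H_1,H_2$ to vanish at $\Bx$. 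These are equivalent formulations of the same Galois argument; your version has the merit of making transparent why the finitely many ramified or bad-reduction primes are harmless once $\xi_1\to\infty$.
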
 

\begin{proof}
We show that points considered in (a) and (c) lie on a uniformly bounded number of co-dimension 2 sub-schemes of $\bA^n(\bZ)$ (see also the proof of Lemma 3.1 in \cite{X2}). This claim is clear for (a), since $G_i^{(p)}, G_j^{(p)}$ are equal to $F_{k_i}, F_{k_j}$ in (\ref{Qbar fact}) for some $k_i \ne k_j$, and these are not proportional since we assumed that $F$ is square-free as a polynomial. \\ \\
Now suppose that $\Bx$ is such that $F_p^\ast(\Bx) \equiv 0 \pmod{p}$ for some prime $p > \xi_1$. Then there exists a factor $\G$ of $F_p^\ast$ defined and irreducible over $\bF_p$ 
 such that $\G(\Bx) \equiv 0 \pmod{p}$. Since by assumption $\G$ is irreducible over $\bF_p$ but has non-trivial factors over $\ol{\bF_p}$, the absolute Galois group $\Gal(\ol{\bF_p}/\bF_p)$ acts non-trivially on the factors of $\G$. Since $\Bx \in \bZ^n$ implies that $\Bx \pmod{p} \in \bF_p^n$, it follows that $\Bx \pmod{p}$ remains fixed by this action. Hence $\Bx \pmod{p}$ must satisfy 
\[H_1(\Bx) \equiv H_2 (\Bx) \equiv 0 \pmod{p}\]
for some distinct conjugate factors $H_1, H_2$ of $F_p^\ast$ defined over $\ol{\bF_p}$. Therefore, $\Bx$ again lies on a uniformly bounded collection of co-dimension 2 sub-schemes. \\ \\
The necessary estimate in the lemma then follows from Lemma \ref{Ekedahl sieve}. 
\end{proof}

Our next application of Ekedahl's sieve follows the same argument as in \cite{LX}. Let $G(\Bx) = \frac{\partial F}{\partial x_1} (\Bx)$. Define the variety $V_{F,G}$ to be 
\begin{equation} \label{VFG} V_{F,G} = \{\Bx \in \bC^n : F(\Bx) = G(\Bx) = 0\}.
\end{equation}
Observe that $V_{F,G}$ is of co-dimension two and is defined over $\bZ$. Put 
\[N^\ast(p; B) = \{\Bx \in \bZ^n : \lVert \Bx \rVert \leq B, \Bx \pmod{p} \in V_{F,G}(\bF_p)\}.\]
It follows from Lemma \ref{Ekedahl sieve} that
\begin{equation}\label{Eke bd}N^\ast(B)=\# \bigcup_{p > \xi_1} N^\ast(p; B) = O \left(\frac{B^n}{\xi_1 \log \xi_1} + B^{n-1} \right).\end{equation}

\subsection{Estimating $N_2(B)$} Put
\[N^\dagger(p; B) = \#\{\Bx \in \bZ^n : \lVert \Bx \rVert \leq B, p^2 | F(\Bx), p \nmid G(\Bx)\}.\]
It then follows that
\[N_2(B) \leq \sum_{\xi_1 < p \leq \xi_2} N^\dagger(p; B) + \sum_{\xi_1 < p \leq \xi_2} N^\ast(p; B),\]
and since we have already estimated the second sum by $N^*(B)$, it suffices to estimate the former. \\

Let us now define the function, for a polynomial $f$ in a single variable $x$, by
\[\rho_f(m) = \#\{s \in \bZ/m\bZ : f(s) \equiv 0 \pmod{m}\}.\]
It is clear from the Chinese Remainder Theorem that $\rho_f$ is multiplicative. Observe that $\rho_f(p) \leq d$ if $p$ does not divide all coefficients of $f$ and in this case the bound is independent of $p$. Moreover, if $p \nmid \Delta(f)$, then for any positive integer $k$ we have $\rho_f(p^k) \leq d$ by Hensel's lemma and this is independent of the coefficients of $f$.\\

Now for fixed $(x_2, \cdots, x_n)$, the solutions to $f(x) = F(x, x_2, \cdots, x_n) \equiv 0 \pmod{p^2}$ contributing to $N_p^\dagger(X)$ must satisfy $p \nmid \Delta(f)$; in particular, the number of solutions in $\bZ/p^2\bZ$ is at most $d$.  We then have that
\begin{equation} \label{Eke con} \sum_{\xi_1 < p \leq \xi_2} N_p^\dagger(B) = O_d \left(B^{n-1} \sum_{\xi_1 < p \leq \xi_2} \left(\frac{B}{p^2} + 1 \right) \right) = O_d \left(\frac{B^n}{\xi_1} + \frac{B^{n-1} \xi_2}{\log \xi_2}\right).\end{equation}
We recall that $\xi_2 = B (\log B)^{\delta_n}$ and we can choose $\delta_n=1/n$, and $\xi_1 = \xi_1(B)$ is a function which tends to infinity, so that (\ref{Eke con}) gives an acceptable contribution to $N_2(B)$. 




\section{Estimating $N_3(B)$: completing the proof of Theorem \ref{MT}}
\label{N3}


In this section we show that $N_3(B) = o(B^n)$ and thus finish the proof of Theorem \ref{MT}. For convenience we sometimes write $k=d-1$ in analogy of the problem for $k$-free values treated in \cite{LX}.\\

In view of Lemma \ref{lem_ddagger}, it suffices to consider the set 
\begin{equation} \label{final error} \N_3^\dagger(B) =  \lbrace \Bx \in \bZ^n : |\Bx| \leq B, F(\Bx) = uq^k \text{ for a prime } q>\xi_2 \text{ and } p^{k} | u\implies p > \xi_1, \end{equation}
\[p^2 \nmid F(\Bx) \text{ for } \xi_1 < p \leq \xi_2, \text{ and } \Bx \pmod{p} \notin \Scal_p \quad\forall p\mid u\text{ such that }p>\xi_1\}\]
and put $N_3^\dagger(B) = \# \N_3^\dagger(B)$. Observe that
\begin{equation} \label{N3 decomp} N_3(B) \leq N_3^\dagger(B) + N^\ddagger(B),\end{equation} 
therefore it is enough to estimate $N_3^\dagger(B)$. 
We shall establish the following preliminary result: 

\begin{lemma} \label{u factor} Let $\Bx \in \N_3^\dagger(B)$ and $u,q$ be as in (\ref{final error}). Then we have
	\[u = O\left(B (\log B)^{-(d-1)\delta_n} \right).\]
	Furthermore, for $B$ large enough $u$ can be written as $u = u_1 u_2$, where $u_1$ divides
	\[C(\xi_1) = \prod_{p \leq \xi_1} p^{d-2},\]
	and $u_2$ is square-free with each prime divisor $p$ of $u_2$ satisfying $\xi_1 < p \leq \xi_2$. 
\end{lemma}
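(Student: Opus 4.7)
The proof is a direct bookkeeping argument in three steps: establish the size bound on $u$, use it to rule out prime divisors of $u$ exceeding $\xi_2$, and then split $u$ into its small-prime and middle-prime parts while controlling the multiplicities.

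First, I would bound $|u|$ from the size of $F(\Bx)$. Since $F$ has degree $d$ and $|\Bx|\leq B$, the trivial estimate gives $|F(\Bx)|\ll_F B^d$. By the definition of $\N_3^\dagger(B)$ we have $F(\Bx)=uq^{d-1}$ with $q>\xi_2=B(\log B)^{\delta_n}$, so
\[ |u| \;=\; \frac{|F(\Bx)|}{q^{d-1}} \;\ll_F\; \frac{B^d}{\xi_2^{d-1}} \;=\; B(\log B)^{-(d-1)\delta_n}, \]
which proves the first claim of the lemma.

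Next, comparing $|u|$ with $\xi_2$, the ratio satisfies $|u|/\xi_2 \ll (\log B)^{-d\delta_n} \to 0$ since $\delta_n>0$, so $|u|<\xi_2$ for $B$ sufficiently large. Consequently every prime divisor $p$ of $u$ satisfies $p\leq |u|<\xi_2$, which rules out any prime factor of $u$ exceeding $\xi_2$; in particular $p\neq q$ since $q>\xi_2$.

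Finally, I would partition the prime divisors of $u$ according to whether $p\leq \xi_1$ or $\xi_1<p\leq \xi_2$, and set $u_1 = \prod_{p\leq\xi_1}p^{v_p(u)}$ and $u_2 = u/u_1$. For $p\leq \xi_1$, the condition ``$p^{d-1}\mid u \Rightarrow p>\xi_1$'' in (\ref{final error}) gives $v_p(u)\leq d-2$, so $u_1 \mid \prod_{p\leq\xi_1}p^{d-2}=C(\xi_1)$. For $\xi_1<p\leq\xi_2$, we have $p<q$, hence $v_p(q^{d-1})=0$, so $v_p(u)=v_p(F(\Bx))\leq 1$ by the $p^2 \nmid F(\Bx)$ condition of (\ref{final error}); therefore $u_2$ is square-free. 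The main ``obstacle'' is really just the first step: the bound on $u$ relies on nothing beyond the trivial degree bound for $F(\Bx)$ and the choice of $\xi_2$, which was precisely engineered so that $|u|<\xi_2$ eventually, after which the structural claim reduces to inspecting the divisibility conditions packaged into $\N_3^\dagger(B)$.
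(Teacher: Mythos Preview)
Your proof is correct and follows essentially the same route as the paper: bound $|u|$ via $|F(\Bx)|\ll_F B^d$ and $q>\xi_2$, deduce that $u$ has no prime factor beyond $\xi_2$, and then read off the multiplicity constraints on the two prime ranges directly from the defining conditions of $\N_3^\dagger(B)$. Your argument is in fact slightly cleaner in two places---you rule out primes $p>\xi_2$ directly from $|u|<\xi_2$ rather than via a separate product inequality, and you make explicit that $v_p(u)=v_p(F(\Bx))$ for $\xi_1<p\le\xi_2$ because $p\ne q$---but these are presentational, not substantive, differences.
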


\begin{proof}
	Observe that from $F(\Bx) = uq^{d-1}$ and our assumptions on $q$, we have
	\[u = O\left(B^d \xi_2^{-(d-1)} \right).\]
	By (\ref{final error}) and (\ref{xi2}), there exists an absolute positive constant $C_1$ such that
	\begin{align*} |u| & < C_1 B^{d - (d-1)} (\log B)^{-(d-1)\delta_n} \\
	& = C_1 B (\log B)^{-(d-1)\delta_n}.
	\end{align*}
	We now factor $u$ into two factors $u_1$ and $u_2$, where $u_1$ consists of only prime factors less than $\xi_1$. We observe that since we have accounted for small prime powers via our treatment of $N_1(B)$, we have that $u_1$ divides $\prod_{p \leq \xi_1} p^{k-1}.$  The factor $u_2$, then, will be composed of prime factors larger than $\xi_1$. Further, it must be \emph{square-free}. This is because, by definition, the prime factors of $u$ between $\xi_1$ and $\xi_2$ divide $u$ exactly once, and $u$ cannot have a prime factor exceeding $\xi_2$ for otherwise
	\[uq^{d-1} \geq B^{d} \left(\log B\right)^{d\delta_n},\]
	which contradicts $\Bx \in [-B,B]^n$ for $B$ sufficiently large. \end{proof}
For each square-free integer $u_2$ such that each prime divisor $p$ of $u_2$ satisfies $\xi_1 < p \leq \xi_2$, put
\begin{equation} \label{big D}\D(u_2) = \prod_{\substack{\xi_1 < p \leq \frac{1}{12} \log(B u_2^{-1}) \\ p \nmid u_2 \\ p \equiv 1 \pmod{k}}} p \end{equation} 
and put $\D(u_2)=1$ if the product is empty. We then have the following lemma:
\begin{lemma} \label{big D lem} Let $u_2$ be a square-free integer such that all of its prime divisors are between $\xi_1$ and $\xi_2$. Let $\omega(m)$ denote the number of distinct prime divisors of $m$. Let $\D(u_2)$ be as in (\ref{big D}). If $q > \xi_2$ is a prime, then there exists exactly $k^{\omega(\D)}$ residue classes $\{\fd_1, \cdots, \fd_{k^{\omega(D)}} \}$ such that
	\[\fd_j^k \equiv q^k \pmod{\D}\]
	for $j = 1, \cdots, k^{\omega(D)}$. 
\end{lemma}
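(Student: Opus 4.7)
The plan is to reduce the count of residue classes modulo $\D$ to a product of counts over the prime divisors of $\D$ via the Chinese Remainder Theorem, and then to carry out the local count at each prime using the defining condition $p \equiv 1 \pmod{k}$.

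By construction in (\ref{big D}), $\D = \D(u_2)$ is a product of distinct primes and hence square-free. The Chinese Remainder Theorem therefore yields the factorisation
\[\#\{\fd \pmod{\D} : \fd^k \equiv q^k \pmod{\D}\} = \prod_{p \mid \D} \#\{\fd \pmod{p} : \fd^k \equiv q^k \pmod{p}\},\]
so it suffices to show that each local factor equals $k$. I would then fix a prime $p \mid \D$ and first establish that $q$ is invertible modulo $p$: by (\ref{big D}), $p \leq \tfrac{1}{12}\log(Bu_2^{-1})$, whereas by hypothesis $q > \xi_2 = B(\log B)^{\delta_n}$, so for $B$ sufficiently large $p < q$; since $q$ is prime this forces $\gcd(p,q)=1$.

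Invertibility of $q$ reduces the congruence $\fd^k \equiv q^k \pmod{p}$ to $(\fd q^{-1})^k \equiv 1 \pmod{p}$, so the number of solutions in $(\bZ/p\bZ)^\ast$ equals the size of the kernel of the $k$-th power map on the cyclic group $(\bZ/p\bZ)^\ast$ of order $p-1$, namely $\gcd(k, p-1)$. The defining condition $p \equiv 1 \pmod{k}$ then gives $\gcd(k,p-1) = k$. No solution $\fd$ can be congruent to $0$ modulo $p$, since that would force $p \mid q^k$ and hence $p \mid q$, which we have just excluded. Multiplying the local count $k$ over the $\omega(\D)$ primes dividing $\D$ produces $k^{\omega(\D)}$ residue classes, as required.

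There is no substantive obstacle: the argument is essentially a Chinese Remainder reduction together with the elementary fact that $x \mapsto x^k$ has kernel of order $\gcd(k,p-1)$ on a cyclic group of order $p-1$. The one point requiring attention is verifying that $q$ is a unit modulo every $p \mid \D$, which relies on the size separation between $\D$ (cut off at $\tfrac{1}{12}\log(Bu_2^{-1})$) and $q$ (at least $\xi_2$); everything else is standard.
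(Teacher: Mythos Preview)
Your proof is correct and follows essentially the same route as the paper: a Chinese Remainder reduction to the prime divisors of $\D$, followed by the local count that each $p\mid\D$ contributes exactly $k$ preimages of $q^k$ because $p\equiv 1\pmod{k}$ and $q$ is a unit modulo $p$. If anything, your version is slightly more explicit in justifying the local count via $\gcd(k,p-1)=k$ and in checking that $\fd\equiv 0$ is excluded.
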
 

\begin{proof} Since all prime divisors of $\D$ are $O(\log B)$, it follows that $q^k$ is a proper $k$-th power residue modulo $\D$. Now consider the family of all $k$-th power residues modulo $\D$. By our choice of $\D$, we have that $k | \varphi(\D)$, so that the family of $k$-th power residues is not the set of all residues modulo $\D$. For each $p | \D$, $q^k$ has $k$ pre-images modulo $p$, meaning there exist $k$ distinct elements $\mathfrak{q}_1, \cdots, \mathfrak{q}_k$ in $\{0, 1, \cdots, p-1\}$ such that $\mathfrak{q}_j^k \equiv q^k \pmod{\Q}$. For a positive integer $l$ let us write $\omega(l)$ for the number of distinct prime divisors of $l$. Then it follows from the Chinese Remainder Theorem that there exist $k^{\omega(\D)}$ residue classes $\{\mathfrak{d}_1, \cdots, \mathfrak{d}_{k^{\omega(\D)}}\}$ modulo $\D$ such that $\mathfrak{d}_j^k \equiv q^k \pmod{\D}$. \end{proof}

Let $C_1$ be as in Lemma \ref{u factor}, and put 
\begin{equation}\label{xi3}\xi_3 = C_1 B (\log B)^{-(d-1)\delta_n}.
\end{equation} 
Lemmas \ref{u factor} and \ref{big D lem} have the following consequence, which is crucial for the estimation of $N_3(B)$:

\begin{lemma} \label{H bound} Let $u_1$ be a divisor of $C(\xi_1)$ and let $u_2\leq \xi_3$ be a square-free integer whose prime divisors $p$ satisfy $\xi_1 < p \leq \xi_2$. Let $H_{u_1, u_2}(B)$ be the number of solutions $(m_1, \cdots, m_n) \in \bZ^n \cap [-B,B]^n$ to the following three congruences:
	\begin{equation} \label{l-1} F(m_1, \cdots, m_n) \equiv 0 \pmod{u_1},
	\end{equation}
	\begin{equation} \label{l-2} F(m_1, \cdots, m_n) \equiv 0 \pmod{u_2},
	\end{equation}
	and for $0 \leq s < \D$, the solutions to the congruences
	\begin{equation} \label{script D} F(m_1, \cdots, m_n) \equiv u_1 u_2 s^{d-1} \pmod{\D}.
	\end{equation}
	such that $(m_1, \cdots, m_n) \pmod{p}  \in (\bZ/p\bZ)^n \setminus \Scal_p$ for $p |  u_2$. 
Then we have 
	\begin{equation} \label{k-cover} N_3^\dagger(B) \leq \sum_{\substack{u_1 | C(\xi_1) \\ u_2 \leq \xi_3}} \frac{ H_{u_1, u_2}(B)}{(d-1)^{\omega(\D)}} .\end{equation}
\end{lemma}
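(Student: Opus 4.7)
The plan is a combinatorial accounting argument: show that each $\Bx \in \N_3^\dagger(B)$ contributes to $H_{u_1, u_2}(B)$ for a unique admissible pair $(u_1, u_2)$ with multiplicity precisely $(d-1)^{\omega(\D(u_2))}$, then divide and sum.

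First, I would fix $\Bx \in \N_3^\dagger(B)$ and invoke Lemma \ref{u factor} to obtain the unique factorization $F(\Bx) = u_1 u_2 q^{d-1}$ in which $u_1 \mid C(\xi_1)$, $u_2 \leq \xi_3$ is squarefree with every prime divisor in $(\xi_1, \xi_2]$, and $q > \xi_2$ is prime. The congruences (\ref{l-1}) and (\ref{l-2}) are immediate from $u_1 u_2 \mid F(\Bx)$, and the condition $\Bx \pmod{p} \in (\bZ/p\bZ)^n \setminus \Scal_p$ for $p \mid u_2$ is built into the definition (\ref{final error}) of $\N_3^\dagger(B)$, since every prime dividing $u_2$ satisfies $p > \xi_1$ and $p \mid u$.

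Next I would verify the congruence (\ref{script D}). By the construction of $\D = \D(u_2)$ in (\ref{big D}), its prime divisors exceed $\xi_1$ and do not divide $u_2$; they therefore cannot divide $u_1$ either, whose prime factors are all at most $\xi_1$. This gives $\gcd(\D, u_1 u_2) = 1$. Substituting $F(\Bx) = u_1 u_2 q^{d-1}$ into (\ref{script D}) reduces the question to $s^{d-1} \equiv q^{d-1} \pmod{\D}$, which by Lemma \ref{big D lem} admits exactly $(d-1)^{\omega(\D)}$ solutions $s \in \{0, 1, \ldots, \D - 1\}$. Hence $\Bx$ is counted precisely $(d-1)^{\omega(\D)}$ times in $H_{u_1, u_2}(B)$, once for each permissible $s$.

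To conclude, I would observe that the integer factorization of $F(\Bx)$ is unique, so distinct points of $\N_3^\dagger(B)$ contribute either to different $(u_1, u_2)$-pairs or disjointly to the same pair. Summing over admissible $(u_1, u_2)$ and dividing each summand by $(d-1)^{\omega(\D(u_2))}$ therefore bounds $N_3^\dagger(B)$ from above by the right-hand side of (\ref{k-cover}). The bound is an inequality (rather than an equality) because $H_{u_1, u_2}(B)$ may include points not coming from $\N_3^\dagger(B)$; this slack is harmless for subsequent steps. The argument is essentially elementary bookkeeping, and the only delicate input is the coprimality $\gcd(\D, u_1 u_2) = 1$, which is guaranteed by the very definition (\ref{big D}) of $\D(u_2)$, together with the exact multiplicity count from Lemma \ref{big D lem}; no substantial new obstacle arises.
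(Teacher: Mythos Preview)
Your proposal is correct and follows essentially the same approach as the paper's proof, which is the one-line observation that solutions to (\ref{script D}) partition into classes of size $(d-1)^{\omega(\D)}$ by Lemma~\ref{big D lem}. You have simply made explicit the bookkeeping the paper leaves to the reader: the association of each $\Bx\in\N_3^\dagger(B)$ to its pair $(u_1,u_2)$ via Lemma~\ref{u factor}, the verification of (\ref{l-1}), (\ref{l-2}) and the $\Scal_p$-condition, and the coprimality $\gcd(\D,u_1 u_2)=1$ needed to reduce (\ref{script D}) to the setting of Lemma~\ref{big D lem}.
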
 
\begin{proof} (\ref{k-cover}) follows from the fact that the solutions to (\ref{script D}) can be partitioned into sets of cardinality $(d-1)^{\omega(\D)}$ by Lemma \ref{big D lem}.  
\end{proof} 

We can directly estimate the quantity
\[N_4(B) = \sum_{\substack{u_1 | C(\xi_1) \\ u_2 \leq \xi_3}} \frac{H_{u_1,u_2}(B)}{(d-1)^{\omega(\D)}}.\]
For comparison, the analogous quantity in \cite{Hoo1} and \cite{X2} is estimated using the Selberg sieve. However, in the current case the modulus $u_1 u_2 \D$, due to our restriction on $u_2$, or rather, the size of $\xi_3$, is smaller than $B$, and we can make do without the Selberg sieve. \\

We write $\HH(u_1, u_2)$ for the number of solutions modulo $u_1 u_2 \D$ to the congruences (\ref{l-1}), (\ref{l-2}), and (\ref{script D}). Then plainly we have
\[ H_{u_1, u_2}(B) = O \left(\HH(u_1, u_2) \left(\frac{B}{u_1 u_2 \D} + 1 \right)^n \right).
\]
With the choice of our parameters, e.g. \eqref{xi1}, \eqref{xi2} and \eqref{xi3}, we have  $u_1 u_2 \D\ll B$. Indeed, $u_1\leq C(\xi_1)\ll \log_2(B)^{O((d-2)/\log_4(B))}, \D\ll e^{2\frac 1{12}\log(B/u_2)}\ll (B/u_2)^{1/6}$. We have $u_2\leq \xi_3=C_1 B(\log B)^{-(d-1)\delta_n}$, so $u_2\D\ll u_2^{5/6}B^{1/6}\ll B(\log B)^{-\frac 5 6 (d-1)\delta_n}$ and therefore $u_1 u_2 \D\ll B$. Thus we can ignore the constant term and conclude that
\begin{equation} H_{u_1, u_2}(B) = O \left(\HH(u_1, u_2) \left(\frac{B}{u_1 u_2 \D}  \right)^n \right).
\end{equation}
Note that in \cite{Hoo1} and \cite{X2} the modulus $u_1 u_2 \D\gg B$ and the constant term is not absorbed by the fraction $B/(u_1 u_2 \D)$. In our case since $u_1 u_2 \D\ll B$, the solutions counted by $\HH(u_1,u_2)$ are bounded trivially by $B^n$ while in \cite{X2}, say, they are close to $B^{2n}$, and  the latter requires more subtle sieving with Selberg weights and exponential sums estimates. In our case a direct application of Lang-Weil bound suffices to get the necessary saving from the main term \eqref{N1B main estimate}.\\


Let us now evaluate the quantity $\HH(u_1,u_2)$ by first estimating trivially the number of solutions of \eqref{l-1}. We use the Lang-Weil theorem for estimating the number of solutions of (\ref{l-2}), and (\ref{script D}). Note that in both cases we have geometrically irreducible varieties, in the case of (\ref{l-2}) it follows from the condition $\Bm\pmod p\notin\Scal_p$ for $p\mid u_2$. Something more, if $F$ defines a geometrically irreducible hypersurface over $\bF_p^n$ then Lang-Weil yields 
\[\rho_F(p)=p^{n-1}+ O_F(p^{n-3/2}).\]
From the multiplicativity of the function $\rho_F$ for square-free $m$ we then have
\begin{align*}\rho_F(m)&=\prod_{p|m}\left(p^{n-1}+O_F(p^{n-3/2})\right)\\
&=m^{n-1}\prod_{p|m}\left(1+O_F(p^{-1/2})\right)\\
&=O_F(m^{n-1}\sigma_{-1/4}(m)),
\end{align*}
where we write
\[\sigma_{\alpha}(m) = \sum_{s | m} s^{\alpha}\]
for a positive integer $m$ and a real number $\alpha$.\\

Then for the number of solutions of \eqref{l-2} we have $\rho_F(u_2)=O_F(u_2^{n-1}\sigma_{-1/4}(u_2))$ and for the number of solutions of \eqref{script D} we have $\rho_{F,s}(\D)=O_F(\D^{n-1}\sigma_{-1/4}(\D))$ for each integer $s\in\left [0,\D \right)$. Also note that the dependence on $F$ in the Lang-Weil theorem is on geometric characteristics of the hypersurface considered and uniform in $s$. Thus we obtain
\[\HH(u_1, u_2) = O \left(u_1^n u_2^{n-1} \sigma_{-1/4}(u_2) \D^n \sigma_{-1/4}(\D) \right).\]

Therefore we have 
\begin{align*} N_4(B) & = O\left(\sum_{\substack{u_1 | C(\xi_1) \\ u_2 \leq \xi_3}} \frac{B^n   u_2^{n-1}  \sigma_{-1/4}(u_2) \D^n  \sigma_{-1/4}(\D)}{(u_2 \D)^n (d-1)^{\omega(\D)}}\right)\\
& = O\left( B^n\sum_{\substack{u_1 | C(\xi_1) \\ u_2 \leq \xi_3}} \frac{  \sigma_{-1/4}(u_2)  \sigma_{-1/4}(\D)}{u_2 (d-1)^{\omega(D)}}\right).
\end{align*}
Observe that 
\begin{align*} \sigma_{-1/4}(\D) & = \prod_{p | \D} (1 + p^{-1/4}) \\
& = O\left(\left(\frac{2(d-1)}{3}\right)^{\omega(\D)} \right).
\end{align*}
It follows that
\[N_4(B) = O \left( B^n  \sum_{\substack{u_1 | C(\xi_1) \\ u_2 \leq \xi_3}} \frac{\sigma_{-1/4}(u_2)}{u_2 (3/2)^{\omega(D)}}\right).\]
Let us write 
\begin{equation}\label{xi4}\xi_4 = \xi_4(u_2) = \frac{1}{12} \log(B u_2^{-1}),
\end{equation}
and
\[\D' = \D'(u_2) = \prod_{\substack{p \leq \xi_4\\p\equiv 1(k)}} p.\]
%
%
For a square-free number $\ell$ we have
\[\sigma_0(\ell) = \prod_{p | l} (1 + 1) = 2^{\omega(\ell)}.\]
Then from the definitions of $\D$ and $\D'$ it follows that
\[(3/2)^{\omega(\D')} < (3/2)^{\omega(D)} C(\xi_1) (3/2)^{\gcd(\D', u_2)} < (3/2)^{\omega(\D)} C(\xi_1) \sigma_0(\gcd(\D', u_2).\]
Hence, there exists a positive number $C_2$ such that
\[\frac{1}{(3/2)^{\omega(\D)}} < \frac{C_2}{(3/2)^{\omega(\D')}} \sigma_0(\gcd(\D', u_2)) \exp(2(d-2) \xi_1).\]

From here we obtain the estimate
\begin{equation} \label{N4B pen} N_4(B) = O\left(\exp((2d-3)\xi_1) B^n\sum_{\substack{ u_2 \leq \xi_3}} \frac{  \sigma_{-1/4}(u_2)\sigma_0(\gcd(\D', u_2)) }{(3/2)^{\omega(\D')} u_2} \right),\end{equation}
where we used that the number of the divisors $u_1 | C(\xi_1)$ is $O(\exp(\xi_1))$.
We now estimate the sum
\[S(t) = \sum_{u_2 \leq t}  \sigma_{-1/4}(u_2) \sigma_0(\gcd(\D', u_2)).\]
We proceed as follows.
\begin{align}\label{S t} S(t) & \leq \sum_{h | \D'} \mu^2(h) \sigma_0(h) \sum_{\substack{u_2 \leq t \\ u_2 \equiv 0 \pmod{h}}} \sigma_{-1/4}(u_2) \\
& = \sum_{h | \D'} \mu^2(h) \sigma_0(h) \sum_{\substack{u_2' h \leq t \\ \gcd(u_2', h) = 1}}  \sigma_{-1/4} (h u_2') \notag \\
& \leq \sum_{h | \D'} \mu^2(h) \sigma_0(h)  \sigma_{-1/4} (h) \sum_{u_2' \leq t/h}  \sigma_{-1/4}(u_2') \notag \\
& = O \left(t \sum_{h | \D'} \frac{\mu^2(h) \sigma_0(h)  \sigma_{-1/4}(h)}{h} \right) \notag \\
& = O \left(t \prod_{\substack{p \leq \xi_4\\p\equiv 1(k)}} \left(1 + \frac{4}{p}\right) \right) \notag \\
& = O \left(t (\log \xi_4)^{4}\right) \notag \\
& = O \left(t (\log \log B)^{4}\right). \notag
\end{align}
Next, we have
\[\omega(\D') = \pi(\xi_4; k, 1) \sim \frac{\xi_4}{\varphi(k) \log \xi_4},\]
where $\pi(B; q, a)$ is the counting function of primes $p$ satisfying $p \equiv a \pmod{q}$ up to $B$, and the above asymptotic follows from 
the prime number theorem for arithmetic progressions. Therefore we may find a constant $C_3$ such that
\[\omega(\D') > \frac{C_3 \xi_4}{\log \xi_4}\] 
for all $B$ sufficiently large.\\

Next, we cut the range of the summation in (\ref{N4B pen}) into sub-ranges of the type $(e^{-\alpha-1}\xi_3,e^{-\alpha}\xi_3]$ for non-negative integer values of $\alpha$. In each of these sub-ranges we have by \eqref{xi4} that
\[\frac 1{12}\log\left(\frac{Be^{\alpha}}{\xi_3}\right)\leq \xi_4\leq \frac 1{12}\log\left(\frac{Be^{\alpha+1}}{\xi_3}\right).\]
We proceed by following Hooley's treatment of the term $N^{(6)}(X)$ from \cite[\S8]{Hoo1}. In each sub-range we bound the factor $(3/2)^{\omega(\D')}$ from \eqref{N4B pen} and also using \eqref{S t}, we can easily see that, just like in \cite[\S8]{Hoo1}, there exist some positive number $C_4$ such that
\begin{equation}\label{eq:N4_xi1}N_4(B) = O\left(\frac{B^n \exp((2d-3) \xi_1) }{(\log B)^{C_4/\log_3 B}}\right).\end{equation}
Recalling that after \eqref{xi1} we have $\exp((2k-1)\xi_1)\ll (\log_2 B) ^{C_5/\log_4 B}$ for some constant $C_5$, we get $N_4(B)=o(B^n)$ and this suffices for the proof of Theorem \ref{MT}.

\subsection*{Acknowledgements} 
K. Lapkova is supported by a Hertha Firnberg grant [T846-N35] of the  Austrian Science Fund (FWF).


\end{document}